\DeclareMathOperator{\prvtex}{\mathsf{P}}
\DeclareMathOperator{\Mvtex}{\mathsf{E}}
\DeclareMathOperator{\Dvtex}{Var}
\DeclareMathOperator{\cov}{Cov}
\newcommand{\bGamma}{{\boldsymbol \Gamma}}
\newcommand{\bXi}{{\boldsymbol \Xi}}
\newcommand{\bSigma}{{\boldsymbol \Sigma}}
\newcommand{\bxi}{{\boldsymbol \xi}}
\newcommand{\bydef}{\stackrel{\text{\rm def}}{=}}
\newcommand{\weak}{\stackrel{\text{\rm W}}{\longrightarrow}}
\newtheorem{thm}{Theorem}
\newtheorem{lem}{Lemma}
\newtheorem{prop}{Proposition}
\theoremstyle{definition}
\newtheorem{experiment}{Experiment}
\def\index#1{}
\begin{document}

\begin{frontmatter}
\pretitle{Research Article}

\title{Jackknife covariance matrix estimation for observations from mixture}

\author{\inits{R.}\fnms{Rostyslav}~\snm{Maiboroda}\thanksref{cor1}\ead[label=e1]{mre@univ.kiev.ua}}
\author{\inits{O.}\fnms{Olena}~\snm{Sugakova}\ead[label=e2]{sugak@univ.kiev.ua}}
\thankstext[type=corresp,id=cor1]{Corresponding author.}
\address{\institution{Taras Shevchenko National University of Kyiv}, Kyiv, \cny{Ukraine}}



\markboth{R. Maiboroda, O. Sugakova}{Jackknife covariance matrix estimation for observations from mixture}

\begin{abstract}
A general jackknife estimator for the asymptotic covariance of moment
estimators is considered in the case when the sample is taken from a
mixture with varying concentrations of components. Consistency of the
estimator is demonstrated. A fast algorithm for its calculation is
described. The estimator is applied to construction of confidence sets
for regression parameters in the linear regression with errors in
variables. An application to sociological data analysis is considered.
\end{abstract}
\begin{keywords}
\kwd{Finite mixture model}
\kwd{orthogonal regression}
\kwd{mixture with varying concentrations}
\kwd{nonparametric estimation}
\kwd{asymptotic covariance matrix estimation}
\kwd{confidence ellipsoid}
\kwd{jackknife}
\kwd{errors-in-variables model}
\end{keywords}
\begin{keywords}[MSC2010]%
\kwd{62J05}
\kwd{62G20}
\end{keywords}

\received{\sday{20} \smonth{5} \syear{2019}}
\revised{\sday{7} \smonth{9} \syear{2019}}
\accepted{\sday{14} \smonth{10} \syear{2019}}
\publishedonline{\sday{7} \smonth{11} \syear{2019}}

\end{frontmatter}

\section{Introduction}%
\label{SectIntr}

Finite Mixture Models\index{Finite Mixture Models (FMM)} (FMM) are widely used in the analysis of
biological, economic and sociological data. For a comprehensive survey
of different statistical techniques based on FMMs, see
\cite{MLR}. Mixtures with Varying Concentrations (MVC) is a subclass of
these models in which the mixing probabilities are not constant, but
vary for different observations (see \cite{Maiboroda2003,MS2012-2}).

In this paper we consider application of the jackknife technique\index{jackknife ! technique} to the
estimation of asymptotic covariance matrix (the covariance matrix for
asymptotically normal estimator, ACM) in the case when the data are
described by the MVC model.\index{MVC model} The jackknife\index{jackknife} is a well-known resampling
technique usually applied to i.i.d. samples (see \xch{Section}{section} 5.5.2 in
\cite{Shao}, \xch{Chapter}{chapter} 4 in \cite{Wolter}, \xch{Chapter}{chapter} 2 in
\cite{ShaoTu}). On the jackknife estimates\index{jackknife ! estimates} of variance for censored and
dependent data, see \cite{TG}. Its modification to the case of the MVC
model\index{MVC model} in which the observations are still independent but not
identically distributed needs some efforts.

We obtained a general theorem on consistency of the jackknife estimators\index{jackknife ! estimator}
for ACM for moment estimators in the MVC models\index{MVC model} and apply this result to
construct confidence sets for regression coefficients\index{regression coefficients} in linear
errors-in-variables models for MVC data. On general errors-in-variables
models, see \cite{Branham,Cheng,Kukush}. The model and the
estimators for the regression coefficients\index{regression coefficients} considered in this paper was
proposed in \cite{MNS}, where the asymptotic normality of these
estimates is shown.

The rest of the paper is organized as follows. In Section
\ref{SecMixt} we introduce the MVC model\index{MVC model} and describe the estimation
technique for these models based on weighted moments. In Section
\ref{SecMain} the jackknife estimates\index{jackknife ! estimates} for the ACM are introduced and
conditions of their consistency formulated. Section \ref{SecAlg} is
devoted to the algorithm of fast computation of the jackknife estimates.\index{jackknife ! estimates}
In Section \ref{SecErV} we apply the previous results to construct
confidence sets for linear regression coefficients\index{linear regression coefficients} in
errors-in-variables models with MVC. In Section \ref{SecSimul} results
of simulations are presented. In Section \ref{SecEit} we present results
of application of the proposed technique to analyze sociological data.
Proofs are placed in Section \ref{SecProof}. Section \ref{SecConcl}
contains concluding remarks.

\section{Mixtures with varying concentrations}%
\label{SecMixt}

We consider a dataset in which each observed subject $O$ belongs to one
of $M$ subpopulations (mixture components). The number $\kappa (O)$ of
the population which $O$ belongs to is unknown. We observe $d$ numeric
characteristics of $O$ which form the vector $\xi (O)=(\xi ^{1}(O),
\dots ,\xi ^{d}(O))^{T}\in \mathbb{R}^{d}$ of observable variables. The
distribution of $\xi (O)$ may depend on the component $\kappa (O)$:
\begin{equation*}
F_{\xi }^{(m)}(A)=\prvtex \{\xi (O)\in A\ |\ \kappa (O)=m \},\quad  m=1,
\dots ,M,
\end{equation*}
where $A$ is any Borel subset of $\mathbb{R}^{d}$.

We observe variables of $n$ independent subjects $\xi _{j}=\xi (O_{j})$.
The probability to obtain $j$-th subject from $m$-th component
\begin{equation*}
p_{j}^{(m)}=\prvtex \{\kappa (O_{j})=m \}
\end{equation*}
can be considered as the concentration of the $m$-th component in the
mixture when the $j$-th observation was made. The concentrations are
known and can vary for different observations.

So, the distribution of $\xi _{j}$ is described by the model of mixture
with varying concentrations:
%
\begin{equation}
\label{Eq1s}
\prvtex \{\xi _{j}\in A \}=\sum _{m=1}^{M} p_{j}^{(m)} F^{(m)}_{\xi }(A)
\end{equation}
We will denote by
\begin{equation*}
\mu ^{(m)}=\Mvtex^{(m)}[\xi ]=\Mvtex [\xi (O)\ |\ \kappa (O)=m]
=
\int _{\mathbb{R}^{d}}xF^{(m)}_{\xi }(dx)
\end{equation*}
the vector of theoretical first moments of the $m$-th component
distribution. In what follows, $\cov^{(m)}[\xi ]$ means the covariance
of $\xi (O)$ for the $m$-th component, $\Dvtex^{(m)}[\xi ^{l}]$ means the
variance of $\xi ^{l}(O)$ for this component and so on.

To estimate $\mu ^{(k)}$ by observations $\xi _{1}$, \dots , $\xi _{n}$ one
can use the weighted sample mean
%
\begin{equation}
\label{Eq0}
\bar{\xi }^{(k)}_{;n}=\bar{\xi }^{(k)}=\sum _{j=1}^{n} a_{j}^{(k)}\xi
_{j},
\end{equation}
where $a_{j}^{(k)}=a_{j;n}^{(k)}$ are some weights dependent on
components' concentrations, but not on the observed $\xi _{j}=\xi _{j;n}$.
(In what follows we denote by the subscript $;n$ that the corresponding
quantity is considered for the sample size $n$. In most cases this
subscript is dropped to simplify notations.)

To obtain unbiased estimates in (\ref{Eq0}) one needs to select the
weights satisfying the assumption
%
\begin{equation}
\label{Eq1}
\sum _{j=1}^{n} a_{j}^{(k)}p_{j}^{(m)}=
\begin{cases}
1
& \text{ if } k=m,
\\
0
& \text{ if } k\neq m.
\end{cases}
\end{equation}
Let us denote
\begin{gather*}
\bXi =(\xi _{1}^{T}, \dots ,\xi _{n})^{T}
=
\left (
\begin{array}{ccc}
\xi _{1}^{1}& \ldots &\xi _{1}^{d}
\\
\vdots &\ddots &\vdots
\\
\xi _{n}^{1}& \ldots &\xi _{n}^{d}
\end{array}
\right ),
\\
\mathbf{a}=\left (
\begin{array}{ccc}
a_{1}^{(1)} & \ldots & a_{1}^{(M)}
\\
\vdots & \ddots & \vdots
\\
a_{n}^{(1)} & \ldots & a_{n}^{(M)}
\end{array}
\right )
\quad \text{ and }\quad
\mathbf{p}=\left (
\begin{array}{ccc}
p_{1}^{(1)} & \ldots & p_{1}^{(M)}
\\
\vdots & \ddots & \vdots
\\
p_{n}^{(1)} & \ldots & p_{n}^{(M)}
\end{array}
\right ).
\end{gather*}
Then
$\mathbf{p}_{\centerdot }^{(m)}=(p_{1}^{(m)},\dots ,p_{n}^{(m)})^{T}$,
$\mathbf{p}_{j}^{\centerdot }=(p_{j}^{(1)},\dots ,p_{j}^{(M)})^{T}$ and
the same notation is used for the matrix $\mathbf{a}$.

In this notation the unbiasedness condition (\ref{Eq1}) reads
%
\begin{equation}
\label{Eq2}
\mathbf{a}^{T}\mathbf{p}=\mathbb{E},
\end{equation}
where $\mathbb{E}$ means the $M\times M$ unit matrix.

There can be many choices of $\mathbf{a}$ satisfying (\ref{Eq2}). In
\cite{Maiboroda2003,MS2012-2} minimax weights are
considered defined by\footnote{In fact, in
\cite{Maiboroda2003} and \cite{MS2012-2} the weights are defined
as $\tilde{\mathbf{a}}=n\mathbf{a}$ and $\bar{\xi }^{(k)}=\frac{1}{n}
\sum _{j=1}^{n} \tilde{a}_{j}^{(k)}\xi _{j}$. In this paper we adopt
notation which allows to simplify formulas for fast estimator
calculation in Section \ref{SecAlg}.}
%
\begin{equation}
\label{Eq3}
\mathbf{a}=\mathbf{p}\bGamma ^{-1},
\end{equation}
where
\begin{equation*}
\bGamma =\bGamma _{;n}=\mathbf{p}^{T}\mathbf{p}
\end{equation*}
is the Gram matrix of the set of concentration vectors $\mathbf{p}
_{\centerdot }^{(1)}$, \dots , $\mathbf{p}_{\centerdot }^{(M)}$. In what
follows, we assume that these vectors are linearly independent, so
$\det \bGamma >0$ and $\bGamma ^{-1}$ exists. See
\cite{MS2012-2} on the optimal properties of the estimates for
concentration distributions based on the minimax weights (\ref{Eq3}).

To describe the asymptotic behavior of $\bar{\xi }^{(k)}_{;n}$ as
$n\to \infty $, we will calculate its covariance matrix.

Notice that
\begin{equation*}
\cov [\xi _{j}]=\Mvtex [\xi _{j}\xi _{j}^{T}]-\Mvtex [\xi _{j}]\Mvtex [\xi
_{j}]^{T}=\sum _{m=1}^{M}p_{j}^{(m)}\bSigma ^{(m)}
-\sum _{m,l=1}^{M} p
_{j}^{(m)}p_{j}^{(l)}\mu ^{(m)}(\mu ^{(l)})^{T},
\end{equation*}
where $\bSigma ^{(m)}=\cov^{(m)}[\xi ]=\cov [\xi (O)\ |\ \kappa (O)=m]$.
So,
\begin{equation*}
n\cov \bar{\xi }^{(k)}=
\sum _{m=1}^{M}\langle (\mathbf{a}^{(k)})^{2}
\mathbf{p}^{(m)}\rangle _{;n}\bSigma ^{(m)}
-
\sum _{m,l=1}^{M} \langle
\xch{(\mathbf{a}}{\mathbf{a}}^{(k)})^{2}\mathbf{p}^{(m)}\mathbf{p}^{(l)}\rangle _{;n}\mu
^{(m)}(\mu ^{(l)})^{T},
\end{equation*}
where
\begin{equation*}
\langle (\mathbf{a}^{(k)})^{2}\mathbf{p}^{(m)}\rangle _{;n}=n\sum _{j=1}
^{n} (a_{j}^{(k)})^{2}p_{j}^{(m)},\qquad  \langle (\mathbf{a}^{(k)})^{2}
\mathbf{p}^{(m)}\mathbf{p}^{(l)}\rangle _{;n}=n\sum _{j=1}^{n} (a_{j}
^{(k)})^{2}p_{j}^{(m)}p_{j}^{(l)}.
\end{equation*}
Assume that the limits
%
\begin{equation}
\label{EqLim}
\langle \xch{(\mathbf{a}}{\mathbf{a}}^{(k)})^{2}\mathbf{p}^{(m)}\mathbf{p}^{(l)}
\rangle _{\infty }\bydef \lim _{n\to \infty }\langle \xch{(\mathbf{a}}{\mathbf{a}}^{(k)})^{2}
\mathbf{p}^{(m)}\mathbf{p}^{(l)}\rangle _{;n}
\end{equation}
exist. Then the limits
\begin{equation*}
\langle (\mathbf{a}^{(k)})^{2}\mathbf{p}^{(m)}\rangle _{\infty }\bydef
\lim _{n\to \infty }\langle (\mathbf{a}^{(k)})^{2}\mathbf{p}^{(m)}
\rangle _{;n}
\end{equation*}
exist also, since due to (\ref{Eq1}) we have $\sum _{l=1}^{M} p_{j}
^{l}=1$ for all $j$.

So, under this assumption,
%
\begin{equation}
\label{EqSigma}
n\cov [\bar{\xi }^{(k)}]\to \bSigma _{\infty }\quad \text{ as } n\to \infty,
\end{equation}
where
\begin{equation*}
\bSigma _{\infty }\bydef
 \sum _{m=1}^{M}\langle (\mathbf{a}^{(k)})^{2}\mathbf{p}^{(m)}
\rangle _{\infty }\bSigma ^{(m)}
-
\sum _{m,l=1}^{M} \langle \xch{(\mathbf{a}}{\mathbf{a}}
^{(k)})^{2}\mathbf{p}^{(m)}\mathbf{p}^{(l)}\rangle _{\infty }\mu ^{(m)}(
\mu ^{(l)})^{T}.
\end{equation*}
%
\begin{thm}
\label{Th1}
Assume that:
\begin{enumerate}
\item $\frac{1}{n}\bGamma _{;n}\to \bGamma _{\infty }$ as $n\to \infty $
and $\det \bGamma _{\infty }>0$.
\item Assumption (\ref{EqLim}) holds.
\item $\Mvtex^{(m)}[\|\xi \|^{2}]<\infty $ for all $m=1,\dots ,M$.
\end{enumerate}

Then
\begin{equation*}
\sqrt{n}(\bar{\xi }^{(k)}_{;n}-\mu ^{(k)})\weak N(0,\bSigma ^{(k)}
_{\infty }).
\end{equation*}
\end{thm}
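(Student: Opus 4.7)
The plan is to establish asymptotic normality by combining the Cram\'er--Wold device with the Lindeberg CLT for triangular arrays of independent random vectors. For any fixed $\mathbf{t}\in\mathbb{R}^d$ write
\begin{equation*}
\sqrt{n}\,\mathbf{t}^T\bigl(\bar{\xi}^{(k)}_{;n}-\mu^{(k)}\bigr)=\sum_{j=1}^{n} Y_{j;n},\qquad Y_{j;n}\bydef \sqrt{n}\,a_j^{(k)}\,\mathbf{t}^T\bigl(\xi_j-\Mvtex[\xi_j]\bigr).
\end{equation*}
By (\ref{Eq1}) the estimator $\bar{\xi}^{(k)}_{;n}$ is unbiased for $\mu^{(k)}$; the $Y_{j;n}$ are independent with mean zero across $j$; and the variance computation preceding (\ref{EqSigma}) already shows, under Assumptions~2 and~3, that $\sum_{j=1}^{n}\Dvtex[Y_{j;n}]\to\mathbf{t}^T\bSigma_\infty^{(k)}\mathbf{t}$. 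It therefore suffices to check the Lindeberg condition for this scalar array, which combined with Cram\'er--Wold yields the theorem.

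The essential analytic ingredient is the uniform weight bound $\max_{j\le n}|a_j^{(k)}|=O(1/n)$. This comes from the representation $\mathbf{a}=\mathbf{p}\bGamma^{-1}$ in (\ref{Eq3}): since $p_j^{(m)}\in[0,1]$, we have $|a_j^{(k)}|\le M\,\|\bGamma^{-1}\|$, while Assumption~1 forces $\|\bGamma^{-1}\|=n^{-1}\|(n^{-1}\bGamma_{;n})^{-1}\|=O(1/n)$ because $n^{-1}\bGamma_{;n}\to\bGamma_\infty$ with $\bGamma_\infty$ invertible. Consequently $\sqrt{n}\,|a_j^{(k)}|=O(n^{-1/2})$ uniformly in $j$, so on the Lindeberg event $\{|Y_{j;n}|>\varepsilon\}$ the centred observation satisfies $|\mathbf{t}^T(\xi_j-\Mvtex[\xi_j])|>R_n$ for some deterministic $R_n\to\infty$.

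To finish, I would decompose the Lindeberg sum across the $M$ mixture components: each contribution is controlled by $n\sum_j (a_j^{(k)})^2 p_j^{(m)}\,\phi^{(m)}(R_n)$, where $\phi^{(m)}(R)$ denotes the truncated second moment of $\mathbf{t}^T\xi$ under $F_\xi^{(m)}$ on $\{|\cdot|>cR\}$ (after a standard triangle-inequality manipulation of the centring). By Assumption~3 and dominated convergence $\phi^{(m)}(R_n)\to 0$, while Assumption~2 keeps the prefactor $\langle(\mathbf{a}^{(k)})^2\mathbf{p}^{(m)}\rangle_{;n}$ bounded. Hence the Lindeberg sum vanishes and the scalar CLT applies. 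The main obstacle is exactly this uniform-in-$j$ control of truncated moments: the distributions of the $\xi_j$ are themselves mixtures that vary with $j$, so one must first reduce to the finitely many fixed component distributions $F_\xi^{(m)}$ before invoking dominated convergence, and this reduction relies crucially on the $O(1/n)$ weight bound extracted from Assumption~1.
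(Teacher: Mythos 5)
Your proposal is correct. Note, however, that the paper does not actually prove Theorem~\ref{Th1}: it is dispatched in one line as a corollary of Theorem~4.3 of \cite{MS2012-2}, so there is no internal proof to compare against. Your self-contained argument --- Cram\'er--Wold plus the Lindeberg--Feller CLT for the triangular array $Y_{j;n}=\sqrt{n}\,a_j^{(k)}\mathbf{t}^T(\xi_j-\Mvtex[\xi_j])$ --- is the standard route for such weighted sums and is essentially what the cited reference carries out. All the key ingredients are in place: unbiasedness from (\ref{Eq1}), convergence of the variance sum from the computation preceding (\ref{EqSigma}) under Assumptions~2 and~3, and the uniform bound $\max_j|a_j^{(k)}|=O(1/n)$ from Assumption~1 (which the paper itself isolates later as the first claim of Lemma~\ref{Lem1}). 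Your reduction of the Lindeberg sum to the $M$ fixed component distributions before applying dominated convergence is exactly the right way to handle the fact that the $\xi_j$ are not identically distributed; the only points worth spelling out in a full write-up are that $\Mvtex[\xi_j]$ is uniformly bounded (being a convex combination of the finitely many $\mu^{(m)}$), so the centring causes no trouble in the truncation, and that the degenerate case $\mathbf{t}^T\bSigma_\infty^{(k)}\mathbf{t}=0$ is covered by the triangular-array form of the theorem.
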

This theorem is a simple corollary of Theorem 4.3 in
\cite{MS2012-2}.

\section{Jackknife estimation\index{jackknife ! estimation} of ACM of moment estimators}%
\label{SecMain}

In what follows, we will consider unknown parameters of the component
distribution $F^{(k)}_{\xi }$, which can be represented in the form
%
\begin{equation}
\label{Eq4}
\vartheta =\vartheta ^{(k)}=H(\mu ^{(k)}),
\end{equation}
where $H:\mathbb{R}^{d}\to \mathbb{R}^{q}$ is some known function. A
natural estimator for such parameter by the sample $\xi _{1}$, \dots , $
\xi _{n}$ is
%
\begin{equation}
\label{Eq5}
\hat{\vartheta }=\hat{\vartheta }_{;n}^{(k)}=H(\bar{\xi }^{(k)}_{;n}).
\end{equation}
Then asymptotic behavior of this estimator is described by the following
theorem.
%
\begin{thm}
\label{Th2}
In assumptions of Theorem \ref{Th1}, if $H$ is continuously
differentiable in some~neighborhood of $\mu ^{(k)}$, then
\begin{equation*}
\sqrt{n}(\hat{\vartheta }_{;n}^{(k)}-\vartheta ^{(k)})\weak N(0,
\mathbf{V}_{\infty }),
\end{equation*}
where
%
\begin{gather}
\label{Eq6}
\mathbf{V}_{\infty }=\mathbf{V}_{\infty }^{(k)}=\mathbf{H}'(\mu ^{(k)})\bSigma
_{\infty }^{(k)}(\mathbf{H}'(\mu ^{(k)}))^{T},
\\
\mathbf{H}'=\left (
\begin{array}{ccc}
\frac{\partial H^{1}}{\partial \mu ^{1}}&\ldots &\frac{\partial H^{1}}{
\partial \mu ^{d}}
\\
\vdots & \ddots & \vdots
\\
\frac{\partial H^{q}}{\partial \mu ^{1}}&\ldots &\frac{\partial H^{q}}{
\partial \mu ^{d}}
\end{array}
\right ).
\nonumber
\end{gather}
\end{thm}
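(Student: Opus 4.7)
The plan is to apply the multivariate delta method to the conclusion of Theorem~\ref{Th1}. The idea is that continuous differentiability of $H$ near $\mu^{(k)}$ lets us linearize the estimator $\hat\vartheta^{(k)}_{;n} = H(\bar\xi^{(k)}_{;n})$ around $\mu^{(k)}$, and then transfer the Gaussian limit from $\bar\xi^{(k)}_{;n}$ to its image under the linear map $\mathbf{H}'(\mu^{(k)})$.

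First I would extract from Theorem~\ref{Th1} the fact that $\bar\xi^{(k)}_{;n} \to \mu^{(k)}$ in probability (since $\sqrt{n}$-asymptotic normality implies $\bar\xi^{(k)}_{;n} - \mu^{(k)} = O_{\mathbb{P}}(n^{-1/2})$, hence in particular consistency). This ensures that with probability tending to one, $\bar\xi^{(k)}_{;n}$ lies in the neighborhood of $\mu^{(k)}$ on which $H$ is continuously differentiable, so Taylor's theorem applies. Writing the first-order expansion
\begin{equation*}
H(\bar\xi^{(k)}_{;n}) = H(\mu^{(k)}) + \mathbf{H}'(\mu^{(k)})\bigl(\bar\xi^{(k)}_{;n} - \mu^{(k)}\bigr) + r_n,
\end{equation*}
where the remainder satisfies $\|r_n\| = o(\|\bar\xi^{(k)}_{;n} - \mu^{(k)}\|)$ as $\bar\xi^{(k)}_{;n} \to \mu^{(k)}$.

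Next I would multiply through by $\sqrt{n}$ to obtain
\begin{equation*}
\sqrt{n}\bigl(\hat\vartheta^{(k)}_{;n} - \vartheta^{(k)}\bigr) = \mathbf{H}'(\mu^{(k)}) \sqrt{n}\bigl(\bar\xi^{(k)}_{;n} - \mu^{(k)}\bigr) + \sqrt{n}\, r_n.
\end{equation*}
The scaled remainder $\sqrt{n}\, r_n$ is $o_{\mathbb{P}}(1)$: indeed $\sqrt{n}\|\bar\xi^{(k)}_{;n} - \mu^{(k)}\| = O_{\mathbb{P}}(1)$ by Theorem~\ref{Th1}, while $\|r_n\|/\|\bar\xi^{(k)}_{;n} - \mu^{(k)}\| \to 0$ in probability. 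Then Slutsky's theorem together with the continuous mapping theorem (applied to the linear map $v \mapsto \mathbf{H}'(\mu^{(k)}) v$) gives
\begin{equation*}
\sqrt{n}\bigl(\hat\vartheta^{(k)}_{;n} - \vartheta^{(k)}\bigr) \weak \mathbf{H}'(\mu^{(k)}) \cdot N\bigl(0, \bSigma^{(k)}_\infty\bigr) = N\bigl(0, \mathbf{V}_\infty\bigr),
\end{equation*}
with $\mathbf{V}_\infty$ as in~(\ref{Eq6}), since a linear image $A Z$ of a centered Gaussian $Z$ with covariance $\bSigma$ is Gaussian with covariance $A \bSigma A^T$.

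There is no serious obstacle here; the argument is a textbook delta method calculation and the only care needed is justifying that the Taylor remainder after multiplication by $\sqrt{n}$ is negligible, which follows directly from the combination of consistency and the $\sqrt{n}$-rate already established in Theorem~\ref{Th1}.
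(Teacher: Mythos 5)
Your argument is correct and is exactly the route the paper takes: the paper proves Theorem~\ref{Th2} by citing the multivariate delta method (Theorem 3 in Section 5, Chapter 1 of \cite{Borovkov}) applied to the conclusion of Theorem~\ref{Th1}, which is precisely the Taylor-expansion-plus-Slutsky calculation you spell out. No discrepancy to report.
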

This theorem is a simple implication from our Theorem \ref{Th1}
and
Theorem 3 in Section 5, Chapter 1 of \cite{Borovkov}.

So, $\mathbf{V}_{\infty }$ defined by (\ref{Eq6}) is the ACM of the
estimator $\hat{\vartheta }^{(k)}$ (the covariance matrix of the limit
normal distribution of the normalized difference between the estimator
and the estimated parameter). If it was known one could use it to
construct tests for hypotheses on $\vartheta ^{(k)}$ or to derive
confidence set for $\vartheta ^{(k)}$. In fact, for most estimators the
ACM is unknown. Usually some estimate of $\mathbf{V}_{\infty }$ is used
to replace its true value in statistical algorithms.

The jackknife\index{jackknife} is one of most general techniques of ACM estimation. Let
$\hat{\vartheta }$ be any estimator of $\vartheta $ by the data
$\xi _{1}$, \dots , $\xi _{n}$:
\begin{equation*}
\hat{\vartheta }=\hat{\vartheta }(\xi _{1},\dots ,\xi _{n}).
\end{equation*}
Consider estimates of the same form which are calculated by all
observations without one
\begin{equation*}
\hat{\vartheta }_{i-}=\hat{\vartheta }(\xi _{1},\dots ,\xi _{i-1},\xi
_{i+1},\dots ,\xi _{n}).
\end{equation*}
Then the jackknife estimator\index{jackknife ! estimator} for $\mathbf{V}_{\infty }$ is defined by
%
\begin{equation}
\label{Eq6a}
\hat{\mathbf{V}}_{;n}=\hat{\mathbf{V}}_{;n}^{(k)}=n\sum _{i=1}^{n}(
\hat{\vartheta }_{i-}-\hat{\vartheta })(\hat{\vartheta }_{i-}-
\hat{\vartheta })^{T}.
\end{equation}
In our case $\hat{\vartheta }=H(\bar{\xi }^{(k)})$, so
%
\begin{equation}
\label{Eq7}
\hat{\vartheta }_{i-}=H(\bar{\xi }^{(k)}_{i-}),
\end{equation}
where
%
\begin{equation}
\label{Eq7a}
\bar{\xi }^{(k)}_{j-}=\sum _{j\neq i}a_{ji-}^{(k)}\xi _{j}.
\end{equation}
Here $\mathbf{a}_{i-}=(a_{ji-}^{(m)},j=1,\dots ,n,\ m=1,\dots ,M )$ is
the minimax weights matrix calculated by the matrix $\mathbf{p}_{i-}$
of concentrations of all observations except $i$-th one. That is,
$\mathbf{p}_{i-}=(p_{1}^{\centerdot },\dots ,p_{i-1}^{\centerdot },0,p
_{i+1}^{\centerdot },\dots ,p_{n}^{\centerdot })^{T}$,
%
\begin{gather}
\label{Eq8}
\mathbf{a}_{i-}=\mathbf{p}_{i-}\bGamma _{i-}^{-1},
\\
\label{Eq9}
\bGamma _{i-}=\mathbf{p}_{i-}^{T}\mathbf{p}_{i-}.
\end{gather}
Notice that $0$ is placed at the $i$-th row of $\mathbf{p}_{i-}$ as a
placeholder only, to preserve the numbering of the rows in $
\mathbf{p}_{i-}$ and $\mathbf{a}_{i-}$, which corresponds to the
numbering of subjects in the sample.
%
\begin{thm}
\label{Th3}
Let $\vartheta $ be defined by (\ref{Eq4}), $\hat{\vartheta }$ by
(\ref{Eq5}), $\mathbf{V}_{\infty }$ by (\ref{Eq6}) and $\hat{V}_{;n}
^{(k)}$ by (\ref{Eq6a})--(\ref{Eq9}). Assume that:
\begin{enumerate}
\item $H$ is twice continuously differentiable in some neighborhood of
$\mu ^{(k)}$.
\item There exists some $\alpha >4$, such that $\Mvtex [\|\xi (O)\|^{
\alpha }\ |\ \kappa (O)=m]<\infty $ for all $m=1,\dots ,M$.
\item $\frac{1}{n}\bGamma _{;n}\to \bGamma _{\infty }$ as $n\to \infty $
and $\det \bGamma _{\infty }>0$.
\item Assumption (\ref{EqLim}) holds.
\end{enumerate}

Then $\hat{\mathbf{V}}_{;n}^{(k)}\to \mathbf{V}_{\infty }^{(k)}$.
\end{thm}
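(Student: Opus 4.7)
My plan is to linearize $\hat{\vartheta}_{i-}-\hat{\vartheta}$ by Taylor's theorem with integral remainder and then analyze the leave-one-out differences $\Delta_{i}:=\bar{\xi}^{(k)}_{i-}-\bar{\xi}^{(k)}$ through an explicit Sherman--Morrison identity. Theorem~\ref{Th1} yields $\bar{\xi}^{(k)}\to\mu^{(k)}$ in probability, so on an event of probability tending to one both $\bar{\xi}^{(k)}$ and every $\bar{\xi}^{(k)}_{i-}$ lie in the neighborhood of $\mu^{(k)}$ on which $H$ is $C^{2}$. Expanding $H(\bar{\xi}^{(k)}+\Delta_{i})$ around $\bar{\xi}^{(k)}$ gives
$$
\hat{\vartheta}_{i-}-\hat{\vartheta}\;=\;\mathbf{H}'(\bar{\xi}^{(k)})\Delta_{i}+R_{i},\qquad \|R_{i}\|\le C\|\Delta_{i}\|^{2},
$$
with $C$ a uniform bound on $\|H''\|$ in that neighborhood. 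Substituting into~(\ref{Eq6a}) produces
$$
\hat{\mathbf{V}}^{(k)}_{;n}\;=\;\mathbf{H}'(\bar{\xi}^{(k)})\,\mathbf{W}_{n}\,\mathbf{H}'(\bar{\xi}^{(k)})^{T}+\rho_{n},\quad \mathbf{W}_{n}:=n\sum_{i=1}^{n}\Delta_{i}\Delta_{i}^{T},
$$
where $\|\rho_{n}\|\le C'\bigl(n\sum_{i}\|\Delta_{i}\|^{3}+n\sum_{i}\|\Delta_{i}\|^{4}\bigr)$. Continuity of $\mathbf{H}'$ at $\mu^{(k)}$ and Slutsky's lemma then reduce the problem to proving $\mathbf{W}_{n}\to\bSigma_{\infty}^{(k)}$ in probability together with $n\sum_{i}\|\Delta_{i}\|^{4}\to 0$.

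The key identity for $\Delta_{i}$ comes from Sherman--Morrison applied to $\bGamma_{i-}=\bGamma-\mathbf{p}_{i}^{\centerdot}(\mathbf{p}_{i}^{\centerdot})^{T}$. Expanding $\bGamma_{i-}^{-1}$ and using $\mathbf{a}=\mathbf{p}\bGamma^{-1}$, a short calculation yields
$$
\Delta_{i}\;=\;a_{i}^{(k)}(T_{i}-\xi_{i}),\qquad T_{i}:=\frac{1}{1-c_{i}}\sum_{j\neq i}b_{ji}\xi_{j},
$$
with $b_{ji}:=(\mathbf{p}_{j}^{\centerdot})^{T}\bGamma^{-1}\mathbf{p}_{i}^{\centerdot}$ and $c_{i}:=b_{ii}$. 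Assumption~3 gives $n\bGamma^{-1}\to\bGamma_{\infty}^{-1}$, so together with $p_{j}^{(m)}\in[0,1]$ one obtains $a_{i}^{(k)},b_{ji},c_{i}=O(1/n)$ uniformly in $i,j$. Using~(\ref{Eq1}) one checks the calibration identity $\sum_{j\ne i}b_{ji}p_{j}^{(m)}=(1-c_{i})p_{i}^{(m)}$, whence $\Mvtex T_{i}=\sum_{m}p_{i}^{(m)}\mu^{(m)}=\Mvtex\xi_{i}=:\mu_{i}$ and $\cov[T_{i}]=O(1/n)$ uniformly.

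Now decompose $T_{i}-\xi_{i}=(T_{i}-\mu_{i})-(\xi_{i}-\mu_{i})$. The dominant contribution to $\mathbf{W}_{n}$ is
$$
\mathbf{W}_{n}^{\mathrm{main}}\;:=\;n\sum_{i=1}^{n}(a_{i}^{(k)})^{2}(\xi_{i}-\mu_{i})(\xi_{i}-\mu_{i})^{T},
$$
whose expectation equals $n\sum_{i}(a_{i}^{(k)})^{2}\cov[\xi_{i}]=n\cov\bar{\xi}^{(k)}\to\bSigma_{\infty}^{(k)}$ by assumption~4 combined with the mixture formula for $\cov[\xi_{i}]$ and the exact identity~(\ref{EqSigma}). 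Convergence of $\mathbf{W}_{n}^{\mathrm{main}}$ to its mean in probability is a weak law of large numbers: the variance of each matrix entry is bounded by a constant times $n^{2}\sum_{i}(a_{i}^{(k)})^{4}\Mvtex\|\xi_{i}-\mu_{i}\|^{4}=O(n^{-1})\cdot\max_{m}\Mvtex^{(m)}\|\xi\|^{4}\to 0$ under assumption~2. The cross term $n\sum_{i}(a_{i}^{(k)})^{2}(T_{i}-\mu_{i})(\xi_{i}-\mu_{i})^{T}$ and the quadratic term $n\sum_{i}(a_{i}^{(k)})^{2}(T_{i}-\mu_{i})(T_{i}-\mu_{i})^{T}$ are both $o_{P}(1)$ by Cauchy--Schwarz together with the uniform bound $\cov[T_{i}]=O(1/n)$.

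The main technical obstacle is the control of $\rho_{n}$, which demands a uniform-in-$i$ bound on $\|\Delta_{i}\|^{4}$. Since $\|\Delta_{i}\|^{4}=(a_{i}^{(k)})^{4}\|T_{i}-\xi_{i}\|^{4}$ and $(a_{i}^{(k)})^{4}=O(n^{-4})$ uniformly,
$$
\Mvtex\Bigl[n\sum_{i=1}^{n}\|\Delta_{i}\|^{4}\Bigr]\;=\;O(n^{-2})\cdot\max_{m}\Mvtex^{(m)}\|\xi\|^{4}\;\longrightarrow\;0,
$$
and the analogous bound for $n\sum_{i}\|\Delta_{i}\|^{3}$ follows by Cauchy--Schwarz together with $n\sum_{i}\|\Delta_{i}\|^{2}=O_{P}(1)$. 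This is exactly where hypothesis $\alpha>4$ is needed: finite fourth moments of $\|\xi\|$ under each component are essential for controlling $\Mvtex\|T_{i}-\xi_{i}\|^{4}$, and the strict inequality supplies the uniform integrability required to upgrade the $L^{1}$ bounds on the quadratic sums to convergence in probability of $\hat{\mathbf{V}}^{(k)}_{;n}$.
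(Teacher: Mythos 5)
Your argument is correct, and its overall architecture (linearize $H$, isolate the quadratic form in the leave-one-out increments, identify its limit with $\bSigma_{\infty}^{(k)}$, kill the remainder) matches the paper's; but the technical route through the two halves is genuinely different. The paper writes $\hat{\vartheta}-\hat{\vartheta}_{i-}=\mathbf{H}'(\zeta_{i})\mathbf{U}_{i}$ with an intermediate point $\zeta_{i}$, compares against the matrix built from $\mathbf{H}'(\mu^{(k)})$, and controls $\sup_{i}\|\mathbf{H}'(\zeta_{i})-\mathbf{H}'(\mu^{(k)})\|$ via a law-of-iterated-logarithm bound on $\|\bar{\xi}^{(k)}-\mu^{(k)}\|$ (its Lemma 2, via Petrov) together with a polynomial tail bound on $\sup_{j}\|\xi_{j}\|$ (its Lemma 3, which is where $\alpha>4$ is actually exploited, through the exponent $\beta>2/\alpha$ needing $\beta<1/2$). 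You instead take a second-order Taylor expansion around $\bar{\xi}^{(k)}$ and dispose of the remainder purely by moment bounds on $n\sum_{i}\|\Delta_{i}\|^{3}$ and $n\sum_{i}\|\Delta_{i}\|^{4}$; this bypasses both auxiliary lemmas and in fact only uses finite fourth moments, so your last sentence misattributes the role of the strict inequality $\alpha>4$ — your argument never needs it (the paper's does). Your identity $\Delta_{i}=a_{i}^{(k)}(T_{i}-\xi_{i})$ is exactly the paper's Sherman--Morrison formula (\ref{Eq10}) in disguise, and the calibration identity $\sum_{j\neq i}b_{ji}p_{j}^{(m)}=(1-c_{i})p_{i}^{(m)}$ checks out. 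Two small loose ends you should close: (i) the Taylor bound $\|R_{i}\|\le C\|\Delta_{i}\|^{2}$ requires the whole segment from $\bar{\xi}^{(k)}$ to $\bar{\xi}^{(k)}_{i-}$ to lie in the $C^{2}$ neighborhood \emph{uniformly in} $i$, so you need $\sup_{i}\|\Delta_{i}\|\to 0$ in probability — this follows from $a_{i}^{(k)}=O(1/n)$ and a union bound giving $\sup_{i}\|\xi_{i}\|=o_{P}(n)$, but it should be stated; (ii) the bound $\Mvtex\|T_{i}-\xi_{i}\|^{4}=O(1)$ needs a Minkowski-type argument using $\sum_{j\neq i}|b_{ji}|=O(1)$, since the minimax weights $b_{ji}$ may be negative.
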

For \textbf{proof} see \xch{Section}{section}~\ref{SecProof}.

\section{Fast calculation algorithm for jackknife estimator\index{jackknife ! estimator}}%
\label{SecAlg}

Direct calculation of $\hat{\mathbf{V}}_{;n}$ by (\ref{Eq6a})--(\ref{Eq9})
needs $\sim Cn^{2}$ elementary operations. Here we consider an algorithm
which reduces the computational complexity to $\sim Cn$ operations
(linear complexity).

Notice that $\bGamma _{i-}=\bGamma -\mathbf{p}_{i}^{\centerdot }(
\mathbf{p}_{i}^{\centerdot })^{T}$. So
%
\begin{equation}
\label{Eq9a}
(\bGamma _{i-})^{-1}=\bGamma ^{-1}+\frac{1}{1-h_{i}}
\bGamma ^{-1}
\mathbf{p}_{i}^{\centerdot }(\mathbf{p}_{i}^{\centerdot })^{T}\bGamma
^{-1},
\end{equation}
where
%
\begin{equation}
\label{Eq9b}
h_{i}=(\mathbf{p}_{i}^{\centerdot })^{T}\bGamma ^{-1}\mathbf{p}_{i}
^{\centerdot }.
\end{equation}
(Formula (\ref{Eq9a}) can be demonstrated directly by checking
$\bGamma _{i-}^{-1}\bGamma _{i-}=\mathbb{E}$. It is also a corollary to
the Serman--Morrison--Woodbury formula, see A.9.4 in \cite{Seber}.)

Let us denote
\begin{equation*}
\bar{\bxi }_{i-}=
\left (
\begin{array}{ccc}
\bar{\xi }_{i-}^{1(1)}& \ldots &\bar{\xi }_{i-}^{d(1)}
\\
\vdots & \ddots &\vdots
\\
\bar{\xi }_{i-}^{1(M)}& \ldots &\bar{\xi }_{i-}^{d(M)}
\end{array}
\right )
=
((\bar{\xi }_{i-}^{(1)})^{T},\dots ,(\bar{\xi }_{i-}^{(M)})^{T})^{T}
\end{equation*}
and
%
\begin{equation}
\label{Eqbarbxi}
\bar{\bxi }=((\bar{\xi }^{(1)})^{T},\dots ,(\bar{\xi }^{(M)})^{T})^{T}
.
\end{equation}
Then $ \bar{\bxi }_{i-}=(\bGamma _{i-})^{-1}\mathbf{p}_{i-}\bXi _{i-}
$, where $ \bXi _{i-}=(\xi _{1},\dots ,\xi _{i-1},0,\xi _{i+1},\dots ,\xi
_{n})^{T} $. (Zero at the $i$-th row is a placeholder as in the matrix
$\mathbf{p}_{i-}$.) Applying (\ref{Eq9a}) one obtains
\begin{equation*}
\bar{\bxi }_{i-}=\bGamma ^{-1}(\mathbf{p}_{i-})^{T}\bXi _{i-}+
\frac{1}{1-h
_{i}}\bGamma ^{-1}\mathbf{p}_{i}^{\centerdot }(\mathbf{p}_{i}^{\centerdot
})^{T}\bGamma ^{-1}(\mathbf{p}_{i}^{\centerdot })^{T}\bXi _{i-}.
\end{equation*}
This together with $(\mathbf{p}_{i-})^{T}\bXi _{i-}=\mathbf{p}^{T}\bXi -
\mathbf{p}_{i}^{\centerdot }\xi _{i}^{T}$ implies
%
\begin{equation}
\label{Eq10}
\bar{\bxi }_{i-}=\bar{\bxi }+\frac{1}{1- h_{i}}\mathbf{a}_{i}^{\centerdot
}((\mathbf{p}_{i}^{\centerdot })^{T}\bar{\bxi }-\xi _{i}^{T}).
\end{equation}
Then the following algorithm allows one to calculate $
\hat{\mathbf{V}}^{(m)}$ for all $m=1,\dots ,M$ at once by $\sim Cn$
operations.

\begin{algorithm}[h!t]
\caption{\mbox{}}
\begin{algorithmic}
\STATE 1. Calculate $\bGamma $ and $\mathbf{a}$ by (\ref{Eq3}).
\STATE 2. Calculate $h_{i}$, $i=1,\dots ,n$ by (\ref{Eq9b}).
\STATE 3. Calculate $\bar{\bxi }$ by (\ref{Eq0}) and (\ref{Eqbarbxi}).
\STATE 4. Calculate $\bar{\bxi }_{i-}$ by (\ref{Eq10}).
\STATE 5. Calculate $\hat{\vartheta }_{i-}=\hat{\vartheta }_{i-}^{(m)}=H(\bar{\xi }_{i-1}^{(m)})$ for $i=1,\dots ,n$, $m=1,\dots ,M$.
\STATE 6. Calculate $\hat{\mathbf{V}}^{(m)}_{;n}$ by (\ref{Eq6a}) for all $m=1,\dots ,M$.
\end{algorithmic}
\end{algorithm}

%
%
%
%
%
%
%

\section{Regression with errors in variables}%
\label{SecErV}

In this section we consider a mixture of simple linear regressions with
errors in variables. A modification of orthogonal regression estimation
technique was proposed for the regression coefficients\index{regression coefficients} estimation in
\cite{MNS}. We will show how the jackknife ACM estimators\index{jackknife ! ACM estimators} from
Section~\ref{SecMain} can be applied in this case to construct
confidence sets for the regression coefficients.\index{regression coefficients}

Recall the errors-in-variables regression model in the context of
mixture with varying concentrations.

We consider the case when each subject $O$ has two variables of
interest: $x(O)$ and $y(O)$. These variables are related by a strict
linear dependence with coefficients depending on the component that
$O$ belongs to:
%
\begin{equation}
\label{EqO1}
y(O)=b_{0}^{(\kappa (O))}+b_{1}^{(\kappa (O))}x(O),
\end{equation}
where $b_{0}^{(m)}$, $b_{1}^{(m)}$ are the regression coefficients\index{regression coefficients} for
the $m$-th component.\vadjust{\goodbreak}

The true values of $x(O)$ and $y(O)$ are unobservable. These variables
are observed with measurement errors
%
\begin{equation}
\label{EqO2}
X(O) =x(O)+\varepsilon _{X}(O),\qquad  Y(O)=y(O)+\varepsilon _{Y}(O).
\end{equation}
Here we assume that the errors $\varepsilon _{X}(O)$ and $\varepsilon
_{Y}(O)$ are conditionally independent given $\kappa (O)=m$,
%
\begin{equation}
\label{EqO3}
\Mvtex^{(m)}\varepsilon _{X}=\Mvtex^{(m)}\varepsilon _{Y}=0
\quad \text{ and }\quad  \Dvtex^{(m)}\varepsilon _{X}=\Dvtex^{(m)}\varepsilon _{Y}=
\sigma ^{2}_{(m)}
\end{equation}
for all $m=1,\dots ,M$. So the distributions of $\varepsilon _{X}(O)$ and
$\varepsilon _{Y}(O)$ can be different, but their variances are the same
for a given subject. We assume that $\sigma ^{2}_{(m)}>0$, $m=1,\dots
,M$, and are unknown.

As in Section~\ref{SecMixt} we observe a sample $(X(O_{j}),Y(O_{j}))^{T}=(X
_{j},Y_{j})^{T}$, $j=1,\dots ,n$, from the mixture with known
concentrations $p_{j}^{(m)}=\prvtex \{\kappa (O_{j})=m\}$.

In the case of homogeneous sample, when there is no mixture, the
classical way to estimate $b_{0}$ and $b_{1}$ is orthogonal regression.
That is, the estimator is taken as the minimizer of the total least squares
functional which is the sum of squares of minimal Euclidean distances
from the observation points to the regression line. The modification of
this technique for mixtures with varying concentrations proposed in
\cite{MNS} leads to the following estimators for $b_{0}^{(k)}$ and
$b_{1}^{(k)}$:
%
\begin{equation}
\label{EqO4}
\begin{aligned}
\hat{b}_{1}^{(k)}
&=
\frac{
\hat{S}_{XX}^{(k)}-\hat{S}_{YY}^{(k)} +\sqrt{(
\hat{S}_{XX}^{(k)}-\hat{S}_{YY}^{(k)})^{2}+4(\hat{S}_{XY}^{(k)})^{2}}
}{2\hat{S}_{XY}^{(k)}
},
\\
\hat{b}_{0}^{(k)}&=\bar{Y}^{k} -\hat{b}
_{1}^{(k)}\bar{X}^{(k)},
\end{aligned}
\end{equation}
where
\begin{gather*}
\bar{X}^{(k)}=\sum _{j=1}^{n} a_{j}^{(k)}X_{j},\qquad  \bar{Y}^{(k)}=\sum
_{j=1}^{n} a_{j}^{(k)}Y_{j},
\\
\hat{S}_{XX}^{(k)}=\sum _{j=1}^{n} a_{j}^{(k)}(X_{j}-\bar{X}^{(k)})^{2},
\qquad\hat{S}_{YY}^{(k)}=\sum _{j=1}^{n}
a_{j}^{(k)}(Y_{j}-\bar{Y}^{(k)})^{2},
\\
\hat{S}_{XY}^{(k)}=\sum _{j=1}^{n} a_{j}^{(k)}(X_{j}-\bar{X}^{(k)})(Y
_{j}-\bar{Y}^{(k)}).
\end{gather*}
Conditions of consistency and asymptotic normality of these estimators are
given in \xch{Theorems}{theorems} 5.1 and 5.2 from \cite{MNS}. For example, under the
assumptions of Theorem~\ref{Th4} we obtain
\begin{equation*}
\sqrt{n}(\hat{\vartheta }_{;n}^{(k)}-\vartheta ^{(k)})\to N(0,
\mathbf{V}_{\infty }^{(k)}),
\end{equation*}
where $\vartheta ^{(k)}=(b_{0}^{(k)},b_{1}^{(k)})^{T}$.

The ACM $\mathbf{V}_{\infty }$ of the estimator is given by formula (21)
in \cite{MNS}. This formula is rather complicated and involves
theoretical moments of unobservable variables $x(O)$, $\varepsilon
_{X}(O)$ and $\varepsilon _{Y}(O)$. So it is natural to estimate
$\mathbf{V}_{\infty }$ by the jackknife technique,\index{jackknife ! technique} which doesn't need
to know or estimate these moments.

Notice that the estimator $(\hat{b}_{0}^{(k)},\hat{b}_{1}^{(k)})^{T}$
can be represented in terms of Section~\ref{SecMain} if we expand the
space of observable variables including quadratic terms. That is, we
consider the sample
\begin{equation*}
\xi _{j}=(X_{j},Y_{j},(X_{j})^{2},(Y_{j})^{2},(X_{j}Y_{j})^{2})^{T},\quad  j=1,
\dots ,n.
\end{equation*}
Then the estimator $\hat{\vartheta }_{;n}=(\hat{\vartheta }^{1}_{;n},
\hat{\vartheta }^{2}_{;n})^{T}=(\hat{b}_{0}^{(k)},\hat{b}_{1}^{(k)})^{T}$
defined by (\ref{EqO4}) can be represented in form (\ref{Eq5}) with
twice continuously differentiable function $H$ if $\Dvtex^{(k)}[x]
\neq0$ and $b_{1}^{(k)}\neq0$.

So we can apply the technique developed in
Sections~\ref{SecMain}--\ref{SecAlg}. Let us define the estimator
$\hat{\mathbf{V}}_{;n}^{(k)}$ for $\mathbf{V}^{(k)}_{\infty }$ by
(\ref{Eq6a}).
%
\begin{thm}
\label{Th4}
Assume that the following conditions hold.
\begin{enumerate}
\item $\frac{1}{n}\bGamma _{;n}\to \bGamma _{\infty }$ as $n\to \infty $
and $\det \bGamma _{\infty }>0$.
\item  Assumption~(\ref{EqLim}) holds.
\item  $\Mvtex^{(m)}[(x)^{12}]\,{<}\,\infty $, $\Mvtex^{(m)}[(\varepsilon _{X})^{12}]\,{<}\,
\infty $, $\Mvtex^{(m)}[(\varepsilon _{Y})^{12}]\,{<}\,\infty $ for all
$m=1,\dots ,M$.
\item $\Dvtex^{(k)}x(0)\neq0$ and $b_{1}^{(k)}\neq0$.
\end{enumerate}

Then $\hat{\mathbf{V}}_{;n}^{(k)}\to \mathbf{V}_{\infty }^{(k)}$ in
probability as $n\to \infty $.
\end{thm}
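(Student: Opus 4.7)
The plan is to reduce Theorem~\ref{Th4} to a direct application of Theorem~\ref{Th3}, following the construction sketched at the end of Section~\ref{SecErV}. I would set
\begin{equation*}
\xi_j = (X_j,\, Y_j,\, (X_j)^2,\, (Y_j)^2,\, X_j Y_j)^T \in \mathbb{R}^5,
\end{equation*}
so that $\mu^{(k)} = \Mvtex^{(k)}[\xi]$ packages the first- and second-order component moments of $(X,Y)$. The weighted mean $\bar{\xi}^{(k)}$ then produces $\bar{X}^{(k)}, \bar{Y}^{(k)}$ together with the raw weighted second moments, from which $\hat{S}_{XX}^{(k)}, \hat{S}_{YY}^{(k)}, \hat{S}_{XY}^{(k)}$ are smooth (polynomial) functions; composing with~(\ref{EqO4}) exhibits $(\hat{b}_0^{(k)}, \hat{b}_1^{(k)})^T = H(\bar{\xi}^{(k)})$ for an explicit $H\colon \mathbb{R}^5 \to \mathbb{R}^2$. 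It then remains to verify the four hypotheses of Theorem~\ref{Th3} for this choice of $\xi$ and $H$.

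Hypotheses~3 and~4 of Theorem~\ref{Th3} coincide with hypotheses~1 and~2 of Theorem~\ref{Th4}, so no work is required there. For hypothesis~2 of Theorem~\ref{Th3}, note that every coordinate of $\xi_j$ is a polynomial of degree at most two in the unobservable triple $(x(O_j),\, \varepsilon_X(O_j),\, \varepsilon_Y(O_j))$, via (\ref{EqO1})--(\ref{EqO2}). Hence for $\alpha = 6$ the quantity $\|\xi\|^\alpha$ is dominated by a polynomial of degree at most $12$ in these three variables, and hypothesis~3 of Theorem~\ref{Th4} (finiteness of twelfth component moments) gives $\Mvtex^{(m)}[\|\xi\|^6] < \infty$ for every $m$. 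This secures the moment condition with $\alpha = 6 > 4$.

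For the $C^2$-smoothness of $H$ at $\mu^{(k)}$, inspection of~(\ref{EqO4}) reveals two potential singular loci: the denominator vanishing, $S_{XY} = 0$, and the radicand vanishing, $(S_{XX} - S_{YY})^2 + 4 S_{XY}^2 = 0$. Evaluating at $\mu^{(k)}$ and using (\ref{EqO1})--(\ref{EqO3}) together with the conditional independence of the errors,
\begin{equation*}
\cov^{(k)}(X,Y) = \cov^{(k)}\bigl(x + \varepsilon_X,\ b_0^{(k)} + b_1^{(k)} x + \varepsilon_Y\bigr) = b_1^{(k)}\, \Dvtex^{(k)}[x],
\end{equation*}
which is nonzero by hypothesis~4. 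Therefore $S_{XY} \neq 0$ at $\mu^{(k)}$, and this simultaneously forces the radicand to stay strictly positive there (at least equal to $4 S_{XY}^2 > 0$), so $H$ is in fact $C^\infty$ on an open neighborhood of $\mu^{(k)}$. With all four hypotheses of Theorem~\ref{Th3} verified, that theorem delivers $\hat{\mathbf{V}}_{;n}^{(k)} \to \mathbf{V}_\infty^{(k)}$ in probability, as required.

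I expect no deep obstacle: the conceptual work is packaged inside Theorem~\ref{Th3}, and the present argument is essentially a bookkeeping exercise. The most delicate point is the moment count needed to reach $\alpha > 4$, which is what forces the twelfth-moment requirement in hypothesis~3; a sharper polynomial-degree estimate could in principle shave this exponent, but it would not alter the structure of the argument.
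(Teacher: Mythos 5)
Your proposal is correct and follows essentially the same route as the paper, which disposes of Theorem~\ref{Th4} in one line as ``a simple combination of Theorem~\ref{Th3} and Theorem~5.2 from \cite{MNS}''; you have merely filled in the verification of the hypotheses of Theorem~\ref{Th3} (the moment count giving $\alpha=6>4$ and the nonvanishing of $S_{XY}=b_1^{(k)}\Dvtex^{(k)}[x]$ at $\mu^{(k)}$) that the paper leaves implicit in the discussion preceding the theorem. Note only that your last coordinate $X_jY_j$ is the correct choice, whereas the paper's displayed $\xi_j$ lists $(X_jY_j)^2$, evidently a typo.
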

This theorem is a simple combination of Theorem~\ref{Th3} and
\xch{Theorem}{theorem}~5.2 from \cite{MNS}.

In what follows we assume that $\mathbf{V}_{\infty }^{(k)}$ is
nonsingular. This assumption holds, e.g. if for all $m$ the
distributions of $x(O)$, $\varepsilon _{X}(O)$ and $\varepsilon _{Y}(O)$
given $\kappa (O)=m$ are absolutely continuous with continuous PDFs.
(The proof of this fact is rather technical, so we do not present it
here.)

We can construct a confidence set (ellipsoid) for the unknown parameter
$\vartheta ^{(k)}$ applying the Theorem~\ref{Th4} by the usual way.
Namely, let for any $\mathbf{t}\in \mathbb{R}^{2}$
\begin{equation*}
T_{;n}(\mathbf{t})=(\mathbf{t}-\hat{\vartheta }^{(k)}_{;n})^{T}(
\hat{\mathbf{V}}_{;n}^{(k)})^{-1}(\mathbf{t}-\hat{\vartheta }^{(k)}
_{;n}).
\end{equation*}
Then in the assumptions of Theorem~\ref{Th4}, if $\det \mathbf{V}_{
\infty }^{(k)}\neq0$,
%
\begin{equation}
\label{EqAsChi}
T_{;n}(\vartheta ^{(k)})\weak \eta ,\quad  \text{ as } n\to \infty ,
\end{equation}
where $\eta $ is a random variable (r.v.) with chi-square distribution
with 2 degrees of freedom.

Consider $B_{\alpha ;n}=\{\mathbf{t}\in \mathbb{R}^{2}: T_{;n}(
\mathbf{t})\le Q^{\eta }(1-\alpha ) \}$, where $Q^{\eta }(\alpha )$
means the quantile of level $\alpha $ for the r.v. $\eta $. By
(\ref{EqAsChi})
%
\begin{equation}
\label{EqConfEll}
\prvtex \{\vartheta ^{(k)}\in B_{\alpha ;n} \}\to 1-\alpha ,
\end{equation}
so $B_{\alpha ;n}$ is an asymptotic confidence set for $\vartheta ^{(k)}$
of level $\alpha $.

\section{Results of simulation}%
\label{SecSimul}

To assess performance of the proposed technique we performed a small
simulation study. In the following three experiments we calculated
covering frequencies of confidence sets for regression coefficients\index{confidence sets for regression coefficients} in
the model (\ref{EqO1})--(\ref{EqO3}) constructed by (\ref{EqConfEll}) and
corresponding one-dimensional confidence intervals.\index{confidence intervals}

In all experiments for sample size $n=100$ through $5000$ we generated
$B=1000$ samples and calculated estimates for the parameters and
corresponding confidence sets.\index{confidence sets} The one-dimensional confidence intervals\index{confidence intervals}
for $b_{i}^{(k)}$ were calculated by the standard formula
\begin{equation*}
\left [\hat{b}_{i;n}^{(k)} -\lambda _{\alpha /2}\sqrt{\frac{\hat{v}
^{(k)}_{ii;n}}{n}},\hat{b}_{i;n}^{(k)} +\lambda _{\alpha /2}\sqrt{\frac{
\hat{v}^{(k)}_{ii;n}}{n}}\ \right ],
\end{equation*}
where $\hat{v}^{(k)}_{ii;n}$ is the $i$-th diagonal entry of the matrix
$\hat{\mathbf{V}}_{;n}^{(k)}$, $\lambda _{\alpha /2}$ is the quantile of
level $1-\alpha /2$ for the standard normal distribution. The confidence
level for the sets and intervals was taken $\alpha =0.05$.

Then the numbers of cases when the confidence set covers the true value
of the estimated parameter were calculated and divided by $B$. These are
the covering frequencies reported in the tables below.

In all the experiments we considered two-component mixture ($M=2$) with
the concentrations of components
\begin{equation*}
p_{j;n}^{1}=j/n,\qquad  p_{j;n}^{2}=1-j/n.
\end{equation*}
The regression coefficients\index{regression coefficients} were taken as
\begin{equation*}
b_{0}^{(1)}=1/2, \qquad  b_{1}^{(1)}=2,\qquad  b_{0}^{(2)}=-1/2,\qquad  b_{1}^{(2)}=-1/3,
\end{equation*}
and the distribution of the true (unobservable) regressor $x(O)$ was
$N(0,2)$ for $\kappa (O)=1$ and $N(1,2)$ for $\kappa (O)=2$.

\begin{experiment}\label{experiment1}
In this experiment we let $\varepsilon _{X}$ and
$\varepsilon _{Y}\sim N(0,0.25)$. The variance of the errors is so small
that the regression coefficients\index{regression coefficients} can be estimated with no difficulties
even for small sample sizes.

The covering frequencies for confidence sets\index{confidence sets} are presented in
Table~\ref{Tab1}. It seems that they approach the nominal covering
probability $0.95$ with satisfactory accuracy for sample sizes
$n\ge 1000$.
\end{experiment}

\begin{table}[ht]
\caption{Covering frequencies for confidence sets in Experiment 1}
\label{Tab1}
\begin{tabular}{rrrcrrc}
\hline
$n$ & $ b_{0}^{(1)}$ & $b_{1}^{(1)}$ & $(b_{0}^{(1)},b_{1}^{(1)})$ & $b_{0}^{(2)}$ & $b_{1}^{(2)}$ & $(b_{0}^{(2)},b_{1}^{(2)})$ \\
\hline
100 & 0.935 & 0.961 & 0.948 & 0.936 & 0.987 & 0.957 \\
250 & 0.953 & 0.960 & 0.950 & 0.964 & 0.980 & 0.950 \\
500 & 0.940 & 0.954 & 0.939 & 0.958 & 0.973 & 0.962 \\
1000 & 0.946 & 0.949 & 0.943 & 0.954 & 0.971 & 0.935 \\
2500 & 0.961 & 0.949 & 0.948 & 0.937 & 0.953 & 0.947 \\
5000 & 0.947 & 0.949 & 0.948 & 0.954 & 0.956 & 0.958 \\
\hline
\end{tabular}
\end{table}

\begin{experiment}\label{experiment2}
In this experiment we enlarged the variance of
the error terms taking it as $\sigma ^{2}=2$. All other parameters were
the same as in Experiment \ref{experiment1}. The results are presented in
Table~\ref{Tab2}.
\end{experiment}
%
\begin{table}[ht]
\caption{Covering frequencies for confidence sets in Experiment 2}
\label{Tab2}
\begin{tabular}{rrrcrrc}
\hline
$n$ & $b_{0}^{(1)}$ & $b_{1}^{(1)}$ & $(b_{0}^{(1)},b_{1}^{(1)})$ & $b_{0}^{(2)}$ & $b_{1}^{(2)}$ & $(b_{0}^{(2)},b_{1}^{(2)})$ \\
\hline
100 & 0.969 & 0.942 & 0.918 & 0.950 & 0.974 & 0.958 \\
250 & 0.958 & 0.956 & 0.945 & 0.946 & 0.962 & 0.959 \\
500 & 0.949 & 0.945 & 0.936 & 0.953 & 0.966 & 0.960 \\
1000 & 0.959 & 0.946 & 0.954 & 0.947 & 0.958 & 0.942 \\
2500 & 0.956 & 0.949 & 0.950 & 0.947 & 0.961 & 0.958 \\
5000 & 0.953 & 0.941 & 0.952 & 0.955 & 0.955 & 0.968 \\
\hline
\end{tabular}
\end{table}
It seems that the increase of errors dispersion doesn't deteriorate
covering accuracy of the confidence sets.\index{confidence sets}

\begin{experiment}\label{experiment3}
Here we consider the case when the errors
distributions are heavy tailed. We generate the data with $\varepsilon
_{X}$ and $\varepsilon _{Y}$ having Student-T distribution with \mbox{$\mathrm{df}\,{=}\,14$}
degrees of freedom. (This is the smallest df for which assumptions of
Theorem~\ref{Th4} hold.) Covering frequencies are presented in
\xch{Table~\ref{Tab3}.}{Table~\ref{Tab3}}
\end{experiment}

\begin{table}[ht]
\caption{Covering frequencies for confidence sets in Experiment 3}
\label{Tab3}
\begin{tabular}{rrrcrrc}
\hline
$n$ & $b_{0}^{(1)}$ & $b_{1}^{(1)}$ & $(b_{0}^{(1)},b_{1}^{(1)})$ & $b_{0}^{(2)}$ & $b_{1}^{(2)}$ & $(b_{0}^{(2)},b_{1}^{(2)})$ \\
\hline
100 & 0.935 & 0.961 & 0.948 & 0.936 & 0.987 & 0.957 \\
250 & 0.953 & 0.960 & 0.950 & 0.964 & 0.980 & 0.950 \\
500 & 0.940 & 0.954 & 0.939 & 0.958 & 0.973 & 0.962 \\
1000 & 0.946 & 0.949 & 0.943 & 0.954 & 0.971 & 0.935 \\
2500 & 0.961 & 0.949 & 0.948 & 0.937 & 0.953 & 0.947 \\
5000 & 0.947 & 0.949 & 0.948 & 0.954 & 0.956 & 0.958 \\
\hline
\end{tabular}
\end{table}
It seems that the accuracy of covering slightly decreased but this
decrease is insignificant for practical purposes.

\section{Sociologic analysis of EIT data}%
\label{SecEit}

We would like to demonstrate advantages of the proposed technique by
application to the analysis of the External Independent Testing\index{External Independent Testing (EIT)} (EIT)
data (\xch{see}{See} \cite{MM}). EIT is a set of exams for high school
graduates in Ukraine which must be passed for admission to universities.
We use data on EIT-2016 from the official site of \textit{Ukrainian
Center for Educational Quality Assessment}.\footnote{\url{https://zno.testportal.com.ua/stat/2016}}

In this presentation we consider only the data on scores on two
subjects: \textit{Ukrainian language and literature} (Ukr) and on
\textit{Mathematics} (Math). The scores range from 100 to 200 points.
(We have excluded the data on persons who failed on one of the exams or
didn't pass these exams at all.) EIT-2016 contain such data on 246
thousands of examinees. The information on the region (Oblast) of
Ukraine in which each examinee attended the high school is also
available in EIT-2016.

Our aim is to investigate how dependence between Ukr and Math scores
differs for examinees grown up in different environments There can be,
e.g. an environment of adherents of Ukrainian culture and Ukrainian
state, or in the environment of persons critical toward the Ukrainan
independence. EIT-2016 doesn't contain information on such issues. So
we use data on Ukrainian Parliament (Verhovna Rada) election results to
deduce approximate proportions of adherents of different political
choices in different regions of Ukraine.

We divided adherents of 29 parties and blocks that took part in the
elections into three large groups, which are the components of our
mixture:
\begin{enumerate}
\item[(1)] Pro-Ukrainian persons, voting for the parties that then created the
ruling coalition (BPP, Batkivschyna, Narodny Front, Radicals and
Samopomich)
\item[(2)] Contra-Ukrainian persons who voted for the Opposition block, voted
against all or voted for small parties which where under 5\% threshold
on these elections.
\item[(3)] Neutral persons who did not took part in the voting.
\end{enumerate}

Combining these data with EIT-2016 we obtain the sample $(X_{j},Y_{j})$,
$j\,{=}\,1,\dots ,n$, where $X_{j}$ is the Math score of the $j$-th examinee,
$Y_{j}$ is his/her Ukr score. The concentrations of components
$(p_{j}^{1},p_{j}^{2},p_{j}^{3})$ are taken as frequencies of adherents
of corresponding political choice at the region where the $j$-th
examinee attended the high school.

In \cite{MM} the authors propose to use classical linear
regression model (in which the error appears in the response only) to
describe dependence between $X_{j}$ and $Y_{j}$ in these data. But the
errors-in-variables model can be more adequate since the causes which
deteriorate ideal functional dependence $\mathrm{Ukr} = b_{0}+b_{1}$ Math can affect
both Math and Ukr scores causing random deviations of, maybe, the same
dispersion for each variable.

So, in this presentation, we assumed that the data are described by the
model (\ref{EqO1})--(\ref{EqO3}), where $\kappa (O)=1,2,3$ means the
component (environment at which the person $O$ was grown up)
corresponding to one of three political choices given above. (Lower and
upper endpoints of confidence intervals\index{confidence intervals} are given in columns named low
and upp correspondingly.)

\begin{table}[b!]
\caption{Confidence sets for coefficients of regression between Math and Ukr}
\label{TabR}
\begin{tabular}{c|cc|cc|cc}
\hline
&\multicolumn{2}{c|}{ Pro } &\multicolumn{2}{c|}{ Contra }&\multicolumn{2}{c}{ Neutral }\\
\hline
& low & upp & low & upp & low & upp \\
\hline
$b_{0}^{(k)}$ & 40.12 & 40.22 & 236.3 & 240.1 & 84.21 & 87.19 \\
$b_{1}^{(k)}$ & 0.8562 & -0.366 & -0.345 & -2.80 & 0.335 & 0.359 \\
\hline
\end{tabular}
\end{table}

In this model we calculated the confidence intervals\index{confidence intervals} of the level
$\alpha =0.05/3\approx 0.0167$ to derive the unilateral level
$\alpha =0.05$ in comparisons of three intervals derived for three
different components. The results are presented in Table~\ref{TabR}.
We observe that the obtained\vadjust{\goodbreak} intervals are rather narrow. They don't
intersect for different components. So, the regression coefficients\index{regression coefficients} for
different components are significantly different. (Of course, it is so
only if our theoretical model of the data distribution is adequate.)

The orthogonal regression lines corresponding to different components
are presented on Fig~\ref{Fig1}. The solid line corresponds to the
Pro-component, the dashed line for the Contra-component and the dotted
line for the Neutrals.
%
\begin{figure}
\includegraphics{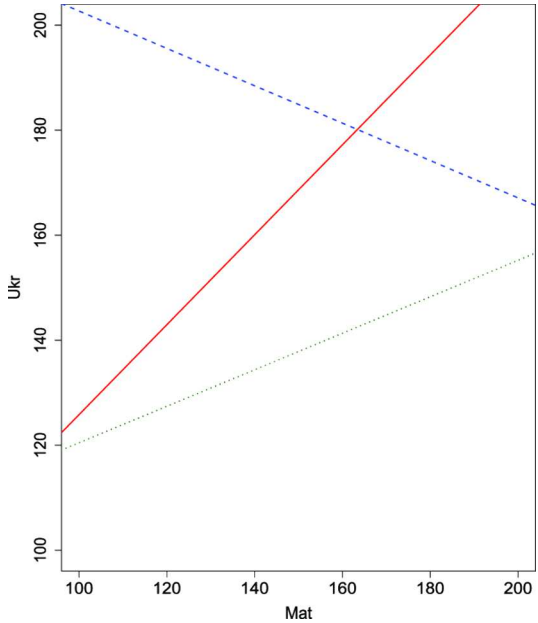}
\caption{Estimated orthogonal regression lines for EIT-2016 data}
\label{Fig1}
\end{figure}

These results have simple and plausible explanation. Say, in the Pro-component the success in Ukr positively correlates with the general
school successes, so with Math scores, too. It is natural for persons who
are interested in Ukrainian culture and literature. In the
Contra-component the correlation is negative. Why? The persons with high
Math grades in this component do not feel the need to learn Ukrainian.
But the persons with less success in Math try to improve their average
score (by which the admission to universities is made) by increasing
their Ukr score. The Neutral component shows positive correlation
between Math and Ukr, but it is less then the correlation in the Pro-component.

Surely, these explanations are too simple to be absolutely correct. We
consider them only as examples of hypotheses which can be deduced from
the data by the proposed technique.

\section{Proofs}%
\label{SecProof}

To demonstrate Theorem \ref{Th3} we need three lemmas. Below the symbols
$C$ and $c$ mean finite constants, maybe different.

\begin{lem}
\label{Lem1}
Assume that $\det \bGamma _{\infty }>0$. Then:
\begin{enumerate}
\item $\sup _{j=1,\dots ,n; m=1,\dots ,M}|a_{j;n}^{m}|=O(n^{-1})$.
\item $\sup _{i,j=1,\dots ,n; i\neq j; m=1,\dots ,M}|a_{j;n}^{m}-a_{ji-;n}
^{m}|=O(n^{-2}\xch{).}{)}$
\end{enumerate}
\end{lem}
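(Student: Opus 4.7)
The key observation is that the hypothesis $\det\bGamma_\infty>0$ forces $\bGamma^{-1}$ to scale like $n^{-1}$. Both conclusions then follow from the explicit representation $\mathbf{a}=\mathbf{p}\bGamma^{-1}$ together with the Sherman--Morrison identity (\ref{Eq9a}), which already rewrites $\bGamma_{i-}^{-1}$ as a rank-one update of $\bGamma^{-1}$.

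The first step is to note that $\bGamma^{-1}=n^{-1}(n^{-1}\bGamma)^{-1}$. Since $n^{-1}\bGamma\to\bGamma_\infty$ with $\bGamma_\infty$ invertible, $(n^{-1}\bGamma)^{-1}$ converges to the finite matrix $\bGamma_\infty^{-1}$ and is in particular bounded in $n$. Consequently every entry of $\bGamma^{-1}$ is uniformly $O(n^{-1})$; equivalently, $\|\bGamma^{-1}\|=O(n^{-1})$ for any fixed matrix norm. For part~1, the identity $a_{j;n}^{(m)}=\sum_{l=1}^{M} p_j^{(l)}(\bGamma^{-1})_{lm}$ together with $p_j^{(l)}\in[0,1]$ and $M$ fixed yields $|a_{j;n}^{(m)}|=O(n^{-1})$ uniformly in $j$ and $m$.

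For part~2, I would use (\ref{Eq9a}) to write, for $j\ne i$,
\[
a_{j;n}^{(m)}-a_{ji-;n}^{(m)}
= -\frac{1}{1-h_i}\,(\mathbf{p}_j^{\centerdot})^{T}\bGamma^{-1}\mathbf{p}_i^{\centerdot}\,(\mathbf{p}_i^{\centerdot})^{T}\bGamma^{-1}\mathbf{e}_m,
\]
where $\mathbf{e}_m\in\mathbb{R}^{M}$ denotes the $m$-th standard basis vector. Both quadratic forms on the right are bilinear combinations of entries of $\bGamma^{-1}$ with coefficients bounded by $1$ in absolute value, so each is $O(n^{-1})$ uniformly in $i,j,m$.

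The only delicate point---and the main obstacle, such as it is---is to control the scalar $1/(1-h_i)$ uniformly in $i$ and $n$. But $h_i=(\mathbf{p}_i^{\centerdot})^{T}\bGamma^{-1}\mathbf{p}_i^{\centerdot}$ is, by exactly the same estimate, $O(n^{-1})$ uniformly in $i$, so $1-h_i\to 1$ uniformly and $1/(1-h_i)$ is eventually bounded (by, say, $2$). Multiplying the three factors in the displayed identity then delivers $|a_{j;n}^{(m)}-a_{ji-;n}^{(m)}|=O(n^{-2})$ uniformly in $i\ne j$ and $m$, which is part~2.
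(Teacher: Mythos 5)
Your proof is correct and follows essentially the same route as the paper: bound $\|\bGamma^{-1}\|$ by $O(n^{-1})$ using the convergence of $n^{-1}\bGamma$ to an invertible limit, then read off part 1 from $\mathbf{a}=\mathbf{p}\bGamma^{-1}$ and part 2 from the Sherman--Morrison rank-one update. In fact your displayed difference formula, with the factor $-1/(1-h_i)$ and the accompanying check that $h_i=O(n^{-1})$ keeps this factor bounded, is the corrected form of the identity that appears in the paper's proof with the prefactor misprinted as $1/h_i$ (which, being $O(n)$, would not yield the stated $O(n^{-2})$ rate).
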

\begin{proof}
By definition, $\frac{1}{n}\bGamma _{;n}\to \bGamma
_{\infty }$, so there exists $c>0$ such that $\det \bGamma _{;n}>c$ for
all $n$ large enough. This together with $|p_{j;n}^{m}|<1$ imply
%
\begin{equation}
\label{Eq11}
\|\bGamma _{;n}^{-1}\|\le \frac{C}{n}.
\end{equation}
(Here $\|\cdot \|$ means the operator norm.) Taking into account that
$\mathbf{a}_{;n}=\mathbf{p}_{;n}\bGamma _{;n}^{-1}$, we obtain the first
statement of the lemma.

Then by (\ref{Eq9a})--(\ref{Eq9b}),
\begin{equation*}
\mathbf{a}_{j}^{\centerdot }-\mathbf{a}_{ji-}^{\centerdot }=
\bGamma
^{-1}\mathbf{p}_{j}^{\centerdot }-\bGamma ^{-1}_{i-}\mathbf{p}_{j}
^{\centerdot }
=
\frac{1}{h_{i}}\bGamma ^{-1}\mathbf{p}_{i}^{\centerdot
}(\mathbf{p}_{i}^{\centerdot })^{T}\bGamma ^{-1}\mathbf{p}_{j}^{
\centerdot }.
\end{equation*}
This together with (\ref{Eq11}) yields the second statement.
\end{proof}

\begin{lem}
\label{Lem2}
Assume that for $m=1,\dots ,M$:
\begin{enumerate}
\item $\Mvtex^{(m)}[\xi ]=0$;
\item  For some $\delta >0$, $\Mvtex^{(m)}(\xi )^{2}|\log |\xi ||^{(1+
\delta )}<\infty $;
\item $\det \bGamma _{\infty }>0$.
\end{enumerate}

Then for some $C<\infty $,
\begin{equation*}
\prvtex \left \{
|\bar{\xi }^{(k)}|>C\sqrt{\frac{\log \log n}{n}}\right \}  \to 0\quad \text{
as } n\to \infty .
\end{equation*}
\end{lem}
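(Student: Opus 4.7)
The plan is to apply Chebyshev's inequality after bounding $\Mvtex\|\bar{\xi}^{(k)}\|^{2}$ by $O(1/n)$; the $\log\log n$ factor in the threshold then produces the vanishing probability.

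First I would use assumption~(1) together with the unbiasedness relation~(\ref{Eq1}) to observe that $\Mvtex[\bar{\xi}^{(k)}]=\sum_{j=1}^{n}a_{j}^{(k)}\sum_{m=1}^{M}p_{j}^{(m)}\mu^{(m)}=0$, since $\mu^{(m)}=\Mvtex^{(m)}[\xi]=0$ for every $m$. Independence of $\xi_{1},\dots,\xi_{n}$ then gives
$$\Mvtex\|\bar{\xi}^{(k)}\|^{2}=\sum_{j=1}^{n}(a_{j}^{(k)})^{2}\,\Mvtex\|\xi_{j}\|^{2}.$$

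Next I would estimate the two factors separately. Assumption~(2) implies $\Mvtex^{(m)}\|\xi\|^{2}<\infty$ for every $m$: outside a bounded neighborhood of $\|\xi\|=1$ the factor $|\log\|\xi\||^{1+\delta}$ is bounded below by a positive constant, while on that neighborhood $\|\xi\|^{2}$ is itself bounded. Consequently $\Mvtex\|\xi_{j}\|^{2}=\sum_{m}p_{j}^{(m)}\Mvtex^{(m)}\|\xi\|^{2}\le C$ uniformly in $j$. Assumption~(3) together with Lemma~\ref{Lem1}(1) yields $\max_{j}|a_{j}^{(k)}|\le C/n$, whence $\sum_{j}(a_{j}^{(k)})^{2}\le n\cdot (C/n)^{2}=C/n$. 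Combining these bounds gives $\Mvtex\|\bar{\xi}^{(k)}\|^{2}\le C/n$.

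Finally, Markov's inequality applied to $\|\bar{\xi}^{(k)}\|^{2}$ yields
$$\prvtex\left\{\|\bar{\xi}^{(k)}\|>C\sqrt{\frac{\log\log n}{n}}\right\}\le \frac{\Mvtex\|\bar{\xi}^{(k)}\|^{2}}{C^{2}\log\log n/n}\le \frac{C'}{\log\log n}\longrightarrow 0,$$
which is the desired conclusion.

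No essential obstacle arises in this $L^{2}$-argument: the role of $\det\bGamma_{\infty}>0$ is only to provide the weight bound via Lemma~\ref{Lem1}, and centering via assumption~(1) removes the mean. The hypothesis $\Mvtex^{(m)}[\|\xi\|^{2}|\log\|\xi\||^{1+\delta}]<\infty$ is in fact strictly stronger than what this argument uses; it is presumably included with a view to a sharper almost-sure strengthening of the bound (which would be obtained by truncating at level $\sqrt{n/\log\log n}$, applying a Bernstein-type tail estimate to the truncated part, and concluding via Borel--Cantelli in the Hartman--Wintner tradition), although such a strengthening is not asserted in the statement as written.
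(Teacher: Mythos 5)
Your proof is correct, but it takes a genuinely different and more elementary route than the paper. The paper deduces the bound from an exponential inequality of law-of-the-iterated-logarithm type (Proposition~\ref{Prop1}, extracted from Theorems 7.2 and 7.3 in Petrov), applied to $\eta_j=\pm na_{j;n}^{(k)}\xi_j^l$ with $B_n\sim Cn$; this is where the hypothesis $\Mvtex^{(m)}[\xi^2|\log|\xi||^{1+\delta}]<\infty$ is genuinely used, and it yields the quantitative tail bound $(\log B_n)^{-b^2}$, i.e.\ a rate polynomial in $\log n$. You instead compute $\Mvtex\|\bar{\xi}^{(k)}\|^2=\sum_j(a_j^{(k)})^2\Mvtex\|\xi_j\|^2\le C/n$ (using assumption 1 for centering, independence, and Lemma~\ref{Lem1} for the weight bound) and apply Chebyshev, obtaining only the rate $O(1/\log\log n)$ for the probability. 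Since the lemma asserts nothing more than that the probability tends to zero, and since its only use in the proof of Theorem~\ref{Th3} is the $O_P\bigl(\sqrt{\log\log n/n}\bigr)$ statement for $\bar{\xi}^{(k)}-\mu^{(k)}$, your weaker rate is fully sufficient. Your closing observation is also accurate: under your argument the logarithmic moment condition is not needed (plain square-integrability suffices), whereas the paper's sharper inequality does require it; the authors presumably chose the Petrov route because it delivers a summable tail bound that would support an almost-sure strengthening, even though none is claimed here.
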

\begin{proof}
Let $\eta _{1}$, \dots , $\eta _{n}$ be independent random
variables with $\Mvtex \eta _{i}=0$. Let us denote $B_{n}=\sum _{j=1}
^{n} \Mvtex (\eta _{j})^{2}$. Then the last formula in the proof of \xch{Theorem}{theorem}
7.2 and \xch{Theorem}{theorem} 7.3 in \cite{Petrov} imply the following
\xch{proposition}{Proposition}.
%
\begin{prop}
\label{Prop1}
If
\begin{equation*}
\varlimsup _{n\to \infty }\frac{1}{n}\sum _{j=1}^{n}\Mvtex \eta _{j}^{2}|
\log |\eta _{j}||^{1+\delta }<\infty ,
\end{equation*}
then for any $b$ such that $0<b<\sqrt{1+\delta }$,
\begin{equation*}
\prvtex \left \{
\sum _{j=1}^{n}\eta _{j}\ge b\sqrt{2B_{n}\log
\log B_{n}}\right \}
\le (\log B_{n})^{-b^{2}}
\end{equation*}
for $n$ large enough.
\end{prop}
Let $\eta _{j}=\pm na_{j;n}^{(k)}\xi _{j}^{l}$. Then $ B_{n}=n^{2}\sum
_{j=1}^{n}(a_{j;n}^{(k)})^{2}\Dvtex \xi _{j}^{l}\sim Cn $ by
Lemma~\ref{Lem1}. Assumption 2 implies that the assumption of
Proposition~\ref{Prop1} holds. So,
\begin{equation*}
\prvtex \left \{  \sum _{j=1}^{n} na_{j;n}^{(k)}\xi _{j}^{l}>b\sqrt{2Cn
\log \log Cn } \right \}  \to 0.
\end{equation*}
This implies the statement of the \xch{lemma}{Lemma}.
\end{proof}

\begin{lem}
\label{Lem3}
Assume that for some $\alpha >0$
\begin{equation*}
\Mvtex^{(m)} [|\xi |^{\alpha }]<\infty\quad  \text{ for all } m=1,\dots , M.
\end{equation*}
Then for any $\beta >1/\alpha $ there exists $C<\infty $ such that
\begin{equation*}
\prvtex \{\sup _{j=1,\dots ,n}|\xi _{j}|>Cn^{\beta } \}\to 0\quad
\text{ as } n\to \infty .
\end{equation*}
\end{lem}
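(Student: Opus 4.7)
The plan is to combine a union bound with Markov's inequality applied to the $\alpha$-th absolute moment, and then use the fact that every component's $\alpha$-th moment is uniformly bounded to turn a sum of $n$ tail probabilities into something that decays polynomially.

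First I would apply the union bound
\begin{equation*}
\prvtex\{\sup_{j=1,\dots,n}|\xi_j|>Cn^{\beta}\}\le \sum_{j=1}^{n}\prvtex\{|\xi_j|>Cn^{\beta}\},
\end{equation*}
and then Markov's inequality at order $\alpha$ to get $\prvtex\{|\xi_j|>Cn^{\beta}\}\le C^{-\alpha}n^{-\alpha\beta}\Mvtex[|\xi_j|^{\alpha}]$. Next, since $\xi_j$ is distributed according to the MVC model (\ref{Eq1s}),
\begin{equation*}
\Mvtex[|\xi_j|^{\alpha}]=\sum_{m=1}^{M}p_j^{(m)}\Mvtex^{(m)}[|\xi|^{\alpha}]\le \max_{m=1,\dots,M}\Mvtex^{(m)}[|\xi|^{\alpha}]=:K<\infty,
\end{equation*}
with $K$ not depending on $j$ or $n$, because $\sum_m p_j^{(m)}=1$.

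Putting these pieces together,
\begin{equation*}
\prvtex\{\sup_{j=1,\dots,n}|\xi_j|>Cn^{\beta}\}\le \frac{nK}{C^{\alpha}n^{\alpha\beta}}=\frac{K}{C^{\alpha}}n^{1-\alpha\beta}.
\end{equation*}
Since the hypothesis $\beta>1/\alpha$ gives $1-\alpha\beta<0$, the right-hand side tends to $0$ as $n\to\infty$ for any fixed $C>0$ (in particular, $C=1$ works).

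There is no real obstacle: the argument is a textbook union-bound-plus-Markov estimate. The only point that uses the MVC structure (as opposed to i.i.d.\ sampling) is the observation that the mixture moment $\Mvtex[|\xi_j|^\alpha]$ is a convex combination of the component moments and hence bounded by $\max_m\Mvtex^{(m)}[|\xi|^\alpha]$ uniformly in $j$ and $n$; this is what lets the $n$ from the union bound be absorbed by the $n^{-\alpha\beta}$ factor.
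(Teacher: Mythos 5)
Your proof is correct. Both your argument and the paper's rest on the same key estimate: Markov's (Chebyshov's) inequality at order $\alpha$ gives $\prvtex\{|\xi_j|>x\}\le R x^{-\alpha}$ with a constant $R$ uniform in $j$ and $n$, which you justify explicitly by writing $\Mvtex[|\xi_j|^{\alpha}]$ as the convex combination $\sum_m p_j^{(m)}\Mvtex^{(m)}[|\xi|^{\alpha}]$ (the paper leaves this uniformity implicit). The only divergence is in how the $n$ tail events are aggregated: you use the union bound, obtaining the clean estimate $K C^{-\alpha}n^{1-\alpha\beta}\to 0$, whereas the paper exploits independence of the $\xi_j$ to write the sup-probability as $1-\prod_{j}(1-\prvtex\{|\xi_j|>Cn^{\beta}\})\le 1-(1-RC^{-\alpha}n^{-\alpha\beta})^{n}\sim 1-\exp(-RC^{-\alpha}n^{1-\alpha\beta})$. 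The two bounds are of the same order, so nothing is lost; your route is marginally more elementary in that it avoids both the independence assumption and the $\log$/$\exp$ asymptotics, while the paper's product computation would give a sharper constant if one cared about it (which one does not here).
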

\begin{proof}
By the Chebyshov inequality we obtain that for some
$0<R<\infty $,
\begin{equation*}
\prvtex \{|\xi _{j}|>x \}\le \frac{R}{x^{\alpha }}.
\end{equation*}
Then for $\alpha \beta >1$,
\begin{gather*}
\prvtex \{\sup _{j=1,\dots ,n}|\xi _{j}|>cn^{\beta }\}=
1-\prvtex \{
\sup _{j=1,\dots ,n}|\xi _{j}|\le cn^{\beta }\}
\\
=1-\prod _{j=1}^{n}\prvtex \{|\xi _{j}|\le Cn^{\beta }\}=1-\prod _{j=1}
^{n}(1-\prvtex \{|\xi _{j}|> Cn^{\beta }\})
\\
\le 1-\left (1-\frac{R}{C^{\alpha }n^{\alpha \beta }}\right )^{n}=1-
\exp \left (n\log \left (1-\frac{R}{C^{\alpha }n^{\alpha \beta }}
\right ) \right )
\\
\sim 1-\exp \left (\frac{-nR}{C^{\alpha }n^{\alpha \beta }} \right )
\to 0 \quad \text{ as } n\to \infty ,
\end{gather*}
if $\alpha \beta >1$.

Lemma is proved. 
\end{proof}

\begin{proof}[Proof of Theorem \ref{Th3}]
Let $\xi _{j}'=\xi _{j}-\Mvtex \xi _{j}$. Then
\begin{equation*}
\bar{\xi }^{(k)}=\sum _{j=1}^{n} a_{j}^{(k)}\xi _{j}=
\sum _{j=1}^{n} a
_{j}^{(k)}\xi '_{j}
+\sum _{j=1}^{n}\sum _{m=1}^{M} a_{j}^{(k)}p_{j}
^{(m)}\mu ^{(m)}
=\sum _{j=1}^{n} a_{j}^{(k)}\xi _{j}'+\mu ^{(k)},
\end{equation*}
due to (\ref{Eq1}). Similarly,
\begin{equation*}
\bar{\xi }^{(k)}_{i-}=\sum _{j\neq i}a_{ji-}^{(k)}\xi _{j}'+\mu ^{(k)}.
\end{equation*}
Let us denote $\mathbf{U}_{i}=(U_{1},\dots ,U_{d})^{T}=\bar{\xi }^{(k)}-\bar{
\xi }_{i-}^{(k)}$. Then
%
\begin{equation}
\label{Eq11a}
\mathbf{U}_{i}=\sum _{j=1}^{n} a_{j}^{(k)}\xi _{j}'-\sum _{j\neq i}a
_{ji-}\xi _{j}'
=a_{i}^{(k)}\xi _{i}'+\sum _{j\neq i}(a_{j}^{(k)}-a_{ji-}
^{(k)})\xi _{j}'
\end{equation}
and
\begin{equation*}
\hat{\vartheta }-\hat{\vartheta }_{i-}=H(\bar{\xi }^{(k)})-H(\bar{
\xi }^{(k)}_{i-})
=\mathbf{H}'(\zeta _{i})\mathbf{U}_{i},
\end{equation*}
where $\zeta _{i}$ is some intermediate point between $\bar{\xi }^{(k)}$
and $\bar{\xi }^{(k)}_{i-}$. So,
%
\begin{equation}
\label{Eq12}
\hat{\mathbf{V}}_{;n}=n\sum _{i=1}^{n} \mathbf{H}'(\zeta _{i})
\mathbf{U}_{i}\mathbf{U}_{i}^{T}(\mathbf{H}'(\zeta _{i}))^{T}.
\end{equation}
Let us denote
%
\begin{equation}
\label{Eq13}
\tilde{\mathbf{V}}_{;n}=n\sum _{i=1}^{n}\mathbf{H}'(\mu ^{(k)})
\mathbf{U}_{i}\mathbf{U}_{i}^{T}(\mathbf{H}'(\mu ^{(k)}))^{T}.
\end{equation}
We will show that
%
\begin{equation}
\label{Eq14}
\tilde{\mathbf{V}}_{;n}\to \mathbf{V}_{\infty }\quad \text{\textrm{ as }} n\to
\infty \text{ in probability}
\end{equation}
and
%
\begin{equation}
\label{Eq15}
n\|\hat{\mathbf{V}}_{;n}-\tilde{\mathbf{V}}_{;n}\|\to 0\quad  \text{\textrm{ as }}
n\to \infty \text{ in probability}.
\end{equation}
These two equations imply the statement of the theorem.

We start from (\ref{Eq14}). Let us calculate $\Mvtex
\tilde{\mathbf{V}}_{;n}$. Notice that
\begin{equation*}
\Mvtex \mathbf{U}_{i}\mathbf{U}_{i}^{T}=(a_{i}^{(k)})^{2}\Mvtex \xi
_{i}'(\xi _{i}')^{T}+
\sum _{j\neq i}(a_{j}^{(k)}-a_{ji-}^{(k)})^{2}
\Mvtex \xi _{i}'(\xi _{i}')^{T}.
\end{equation*}
By  \xch{Assumption}{assumption} 2 of the theorem, $ \sup _{i}\|\Mvtex \xi _{i}'(\xi
_{i}')^{T}\|<C $, and by Lemma \ref{Lem1},
\begin{equation*}
\sup _{j=1,\dots ,n}(a_{j}^{(k)}-a_{ji-}^{(k)})^{2}=O(n^{-4}).
\end{equation*}
So,
\begin{equation*}
\Mvtex \tilde{\mathbf{V}}_{;n}=n\mathbf{H}'(\mu ^{(k)})\sum _{i=1}^{n}(a
_{i}^{(k)})^{2}\Mvtex \xi _{i}'(\xi _{i}')^{T}(\mathbf{H}'(\mu ^{(k)}))^{T}+O(n
^{-1}).
\end{equation*}
By the same way as in (\ref{Eq5}), we obtain
%
\begin{equation}
\label{Eq16}
\Mvtex \tilde{\mathbf{V}}_{;n}\to \mathbf{H}'(\mu ^{(k)})\bSigma _{
\infty }^{(k)}(\mathbf{H}'(\mu ^{(k)}))^{T}=\mathbf{V}_{\infty }.
\end{equation}
Now, let us estimate
\begin{gather}
\Mvtex \|\tilde{\mathbf{V}}_{;n}-\Mvtex \tilde{\mathbf{V}}_{;n}\|^{2}
\le Cn^{2}\sum _{l_{1},l_{2}=1}^{d}\sum _{i=1}^{n}
\Mvtex (U_{i}^{l_{1}}U
_{i}^{l_{2}}-\Mvtex U_{i}^{l_{1}}U_{i}^{l_{2}})^{2}
\nonumber
\\
\label{Eq17}
\le Cn^{2}\sum _{l=1}^{n}\sum _{i=1}^{n}\Mvtex (U_{i}^{l})^{4}.
\end{gather}
Notice that
\begin{gather*}
\Mvtex (U_{i}^{l})^{4}=\Mvtex \left(a_{i}^{(k)}{\xi _{i}^{l}}'+\sum _{j
\neq i}(a_{i}^{(k)}-a_{ji-}^{(k)}){\xi _{j}^{l}}'\right)^{4}
\\
=(a_{i}^{(k)})^{4}\Mvtex ({\xi _{j}^{l}}')^{4}+
6 (a_{i}^{(k)})^{2}
\Mvtex \left (\sum _{j\neq i}(a_{i}^{(k)}-a_{ji-}^{(k)}){\xi _{j}^{l}}'\right )
^{2}
\\
+\Mvtex \left (\sum _{j\neq i}(a_{i}^{(k)}-a_{ji-}^{(k)}){\xi _{j}^{l}}'\right )
^{4}=O(n^{-4})
\end{gather*}
due to Lemma~\ref{Lem1} and Assumption 2 of the theorem. So by
(\ref{Eq17}) we obtain
\begin{equation*}
\Mvtex \|\tilde{\mathbf{V}}_{;n}-\Mvtex \tilde{\mathbf{V}}_{;n}\|^{2}=O(n
^{-1}).
\end{equation*}
This and (\ref{Eq16}) imply (\ref{Eq14}).

Let us show (\ref{Eq15}). Notice that
\begin{gather}
n(\hat{\mathbf{V}}_{;n}-\tilde{\mathbf{V}}_{;n})=
n\sum _{i=1}^{n}(
\mathbf{H}'(\zeta _{i})-\mathbf{H}'(\mu ^{(k)}))\mathbf{U}_{i}(
\mathbf{U}_{i})^{T}(\mathbf{H}'(\zeta _{i}))^{T}
\nonumber
\\
\label{Eq18}
+n\sum _{i=1}^{n}(\mathbf{H}'(\mu ^{(k)}))\mathbf{U}_{i}(\mathbf{U}_{i})^{T}(
\mathbf{H}'(\zeta _{i})-\mathbf{H}'(\mu ^{(k)}))^{T}.
\end{gather}
By Lemma~\ref{Lem1}, $ \sup _{i}\|\mathbf{U}_{i}(\mathbf{U}_{i})^{T}\|
\le Cn^{-2}\sup _{i}\|\xi _{i}'\|^{2} $. By Lemma~\ref{Lem3} and
\xch{Assumption}{assumption}~2 of the theorem,
\begin{equation*}
\sup _{i}\|\xi _{i}'\|^{2}=O_{P}(n^{\beta })\quad  \text{ for } \beta >\frac{2}{
\alpha }.
\end{equation*}
Since $\alpha >2$ we may take here $\beta <1/2$. Let us estimate $
\sup _{i}\|\mathbf{H}'(\zeta _{i})-\mathbf{H}'(\mu ^{(k)})\| $. Notice that
$\zeta _{i}$ is an intermediate point between $\bar{\zeta }^{(k)}$ and
$\bar{\zeta }_{i-}^{(k)}$. By Lemma~\ref{Lem2},
\begin{equation*}
\|\bar{\xi }^{(k)}-\mu ^{(k)}\|=O_{P}\left (\sqrt{
\frac{\log \log n}{n}} \right ).
\end{equation*}
Then, by Lemma~\ref{Lem1},
\begin{equation*}
\sup _{i}\|\bar{\xi }^{(k)}-\bar{\xi }^{(k)}_{i-}\|\le Cn^{-1}\sup _{i}|
\xi _{i}'|=O_{P}(n^{-1+\beta })
\end{equation*}
and
\begin{equation*}
\sup _{i}\|\zeta _{i}-\mu ^{(k)}\|=O_{P}\left (\sqrt{
\frac{\log \log n}{n}} \right ).
\end{equation*}
Due to  \xch{Assumption}{assumption}~1 of the theorem this implies
\begin{equation*}
\sup _{i}\|\mathbf{H}'(\zeta _{i})-\mathbf{H}'(\mu ^{(k)})\|=O_{P}\left (\sqrt{\frac{
\log \log n}{n}} \right )
\end{equation*}
and $\sup _{i}\|\mathbf{H}'(\zeta _{i})|=O_{P}(1)$.\newpage

Combining these estimates with (\ref{Eq18}), we obtain
\begin{equation*}
n(\hat{\mathbf{V}}_{;n}-\tilde{\mathbf{V}}_{;n})=
n\sum _{i=1}^{n} O
_{P}\left (\sqrt{\frac{\log \log n}{n}} \right )\frac{C}{n^{2}}n
^{\beta }O_{P}(1)=O_{P}(1),
\end{equation*}
since $\beta <1/2$. This is (\ref{Eq15}).

Combining (\ref{Eq15}) and (\ref{Eq14}), we obtain the statement of
\xch{theorem}{Theorem}.
\end{proof}

\section{Conclusions}%
\label{SecConcl}

We introduced a modification of the jackknife technique\index{jackknife ! technique} for the ACM
estimation for moment estimators by observations from mixtures with
varying concentrations. A fast algorithm is proposed which implements
this technique. Consistency of derived estimator is demonstrated.
Results of simulations demonstrate its practical applicability for
sample sizes $n>1000$.
%




\end{document}